\newcommand{\hide}[1]{}
\newcommand{\ignore}[1]{}
\newcommand{\C}{\mathbb{C}}
\newcommand{\Z}{\mathbb{Z}}
\newcommand{\F}{\mathbb{F}}
\newcommand{\Legendre}[2]{\genfrac{(}{)}{}{}{#1}{#2}}
\newtheorem{dummy}{Dummy}
\newtheorem{theorem}[dummy]{Theorem}
\newtheorem*{theorem*}{Theorem}
\theoremstyle{definition}
\theoremstyle{remark}
\newtheorem{rem}[dummy]{Remark}
\newtheorem*{rem*}{Remark}
\begin{document}
\bibliographystyle{amsalpha}

\author{S.~Mattarei}
\email{mattarei@science.unitn.it}
\urladdr{http://www-math.science.unitn.it/\~{ }mattarei/}
\address{Dipartimento di Matematica\\
  Universit\`a degli Studi di Trento\\
  via Sommarive 14\\
  I-38050 Povo (Trento)\\
  Italy}

\title[Partial sums of central binomial coefficients and Catalan numbers]{Asymptotics of partial sums of central binomial coefficients and Catalan numbers}

\begin{abstract}
We compute higher order asymptotic expansions for the partial sums of the sequences of central binomial coefficients and Catalan numbers,
$\sum_{k=0}^n\binom{2k}{k}$
and
$\sum_{k=0}^nC_n$.
We also obtain closed forms for the polynomials
$\sum_{k=0}^{q-1}\binom{2k}{k}x^k$
and
$\sum_{k=0}^{q-1}C_kx^k$
over the field of $p$ elements, where $q$ is a power of the prime $p$.
\end{abstract}

\keywords{binomial coefficients; Catalan numbers; asymptotic expansion}
\subjclass[2000]{Primary 05A16; secondary 05A10}

\maketitle

\thispagestyle{empty}

\section{Introduction}

For $n$ a natural number, the $n$th {\em Catalan number} is defined by
\[
C_n=\frac{1}{n+1}\binom{2n}{n}=\binom{2n}{n}-\binom{2n}{n+1}.
\]
These numbers have a wealth of combinatorial interpretations and applications;
several dozens of them are collected in~\cite[Corollary~6.2.3, Exercises~6.19--6.36]{Stanley:EC2}.
As the definition shows, the Catalan numbers are closely related to the {\em central binomial coefficients}
$\binom{2n}{n}$.
Partial sums of these sequences ({\sf A000108} and {\sf A006134} in~\cite{Sloane:OEIS}), and related sums, have recently attracted some interest,
especially with regard to the values of some of them modulo powers of a prime~\cite{PanSun,SunTau:Catalan,SunTau:central_binomial}.
It may be worth noting that, because of the identity $\binom{2k}{k}=\sum_{j=0}^k\binom{k}{j}^2$,
the sum $\sum_{k=0}^n\binom{2k}{k}$ equals the square of the Frobenius norm
of the lower triangular matrix made of the first $n+1$ rows of Pascal's triangle.

In an expanded preprint version of~\cite{SunTau:Catalan}
({\sf arXiv:0709.1665v5}, 23 September 2007),
the authors included some conjectures on
$\sum_{k=0}^n\binom{2k}{k}$
and related sums.
One of them, Conjecture~5.2, concerns the asymptotic behaviour of two such sums;
it claims that
\begin{equation}\label{eq:conj}
\sum_{k=0}^n\binom{2k}{k}
\sim
\frac{4^{n+1}}{3\sqrt{\pi n}}
\qquad\text{and}\qquad
\sum_{k=0}^nC_k
\sim
\frac{4^{n+1}}{3n\sqrt{\pi n}}.
\end{equation}
The standard notation ``$\sim$'' indicates that the ratio of the two sides tends to one as $n$ tends to infinity.
In the first part of this note we prove more precise asymptotics than those in Equation~\eqref{eq:conj},
by means of generating functions and standard methods of asymptotic analysis.
We also discuss the variations
$\sum_{k=0}^n\binom{2k}{k}\alpha^k$
and $\sum_{k=0}^nC_k\alpha^k$
of our partial sums, where $\alpha$ is a complex constant.

In the second part of the paper we consider the values of those partial sums modulo a fixed prime $p$, that is to say,
in a field of characteristic $p$.
Limits and asymptotic expansions are generally meaningless in this discrete setting,
but there is a sense in which an evaluation of an infinite series, such as
\[
\sum_{k=0}^\infty\binom{2k}{k}\alpha^k=\frac{1}{\sqrt{1-4\alpha}},
\quad\text{for $\alpha\in\C$ with $|\alpha|<1/4$},
\]
has corresponding finite sums over a range $0\le k<q$ as its prime characteristic analogues, where $q$ is a power of $p$.
We illustrate this point of view by giving very simple proofs, based on generating functions,
of some known congruences of this type.
They are special cases of more general congruences which were obtained in~\cite{PanSun,SunTau:Catalan,SunTau:central_binomial}
by means of elementary but substantially more intricate arguments.
By contrast, we only use simple power series manipulations
and the congruence $(1+x)^{q}\equiv 1+x^q\pmod{p}$.

\section{Asymptotics}

Equation~\eqref{eq:conj} is not hard to prove using
the fact that
$\binom{2n}{n}\sim 4^n/\sqrt{\pi n}$,
which follows from Stirling's approximation $n!\sim n^ne^{-n}\sqrt{2\pi n}$.
Indeed, for positive sequences the conditions $a_n\sim b_n$
and $\sum_{k=1}^nb_k\to\infty$,
imply
$\sum_{k=1}^na_k\sim\sum_{k=1}^nb_k$.
Consequently, we have
$\sum_{k=0}^n\binom{2k}{k}
\sim
\sum_{k=0}^n 4^{k}/\sqrt{\pi k}$.
The asymptotic behaviour of the latter is easily found.
For, on the one hand we have
$\sum_{k=0}^n 4^k/\sqrt{k}
\ge (1/\sqrt{n})\sum_{k=0}^n 4^k
=(4^{n+1}-1)/(3\sqrt{n})$.
On the other hand, for any $1\le m\le n$ we have
\[
\sum_{k=0}^n\frac{4^k}{\sqrt{k}}
\le
\frac{1}{\sqrt{m}}\sum_{k=m}^n 4^k
+\sum_{k=0}^{m-1}4^k
\le
\frac{4^{n+1}}{3\sqrt{m}}+\frac{4^m}{3},
\]
and taking
$m=\lfloor n-\log_4 n\rfloor$
we find that
\[
\sum_{k=0}^n\frac{4^k}{\sqrt{k}}
\le
\frac{4^{n+1}}{3\sqrt{n-\log_4 n}}+\frac{4^{n+1}}{3n}.
\]
Taken together, these inequalities imply the former estimate in Equation~\eqref{eq:conj}, and the latter can be shown similarly.
Now we proceed to computing more precise asymptotic estimates using the classical method of Darboux.

\begin{theorem}\label{thm:central}
We have
\[
\sum_{k=0}^n\binom{2k}{k}
=
\frac{4^{n+1}}{3\sqrt{\pi n}}
\left(1+\frac{1}{24\,n}+\frac{59}{384\,n^2}+\frac{2425}{9216\,n^3}+O(n^{-4})\right).
\]
\end{theorem}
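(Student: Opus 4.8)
The plan is to apply Darboux's method to the generating function of the partial sums. Writing $S_n=\sum_{k=0}^n\binom{2k}{k}$ and using the classical $\sum_{k\ge 0}\binom{2k}{k}x^k=(1-4x)^{-1/2}$, multiplication by $1/(1-x)$ passes to partial sums, so
\[
\sum_{n\ge 0}S_n\,x^n=\frac{1}{(1-x)\sqrt{1-4x}}=:f(x).
\]
The singularities of $f$ are a branch point at $x=1/4$ and a simple pole at $x=1$. Since $1/4<1$, the dominant singularity is the branch point, while the pole at $x=1$ contributes only an $O(1)$ term to $S_n$, which is exponentially negligible against the expected growth $4^{n+1}/(3\sqrt{\pi n})$.

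First I would localize $f$ at $x=1/4$. Setting $u=1-4x$, the factor $1/(1-x)=4/(3+u)$ is analytic at $u=0$ (its own singularity sits at $u=-3$, i.e. $x=1$), so
\[
f(x)=u^{-1/2}\,\frac{4}{3+u}=\frac{4}{3}\sum_{j\ge 0}\Bigl(-\tfrac{1}{3}\Bigr)^{j}(1-4x)^{\,j-1/2},
\]
valid for $|u|<3$. Truncating after $j=3$ leaves a remainder of the form $(1-4x)^{7/2}h(1-4x)$ with $h$ analytic at $0$; by Darboux its coefficients are $O(4^n n^{-9/2})$, that is $O(n^{-4})$ relative to the main term, which accounts for the stated error.

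The analytic core is the coefficient transfer. For each $j$ the exact coefficient is
\[
[x^n](1-4x)^{\,j-1/2}=\frac{4^n}{\Gamma(\tfrac12-j)}\,\frac{\Gamma(n+\tfrac12-j)}{\Gamma(n+1)},
\]
and the ratio of Gamma functions admits a Stirling expansion $n^{-1/2-j}\bigl(1+c_{j,1}/n+c_{j,2}/n^2+\cdots\bigr)$. Substituting the four values $\Gamma(1/2)=\sqrt\pi$, $\Gamma(-1/2)=-2\sqrt\pi$, $\Gamma(-3/2)=\tfrac{4}{3}\sqrt\pi$, $\Gamma(-5/2)=-\tfrac{8}{15}\sqrt\pi$ and assembling $S_n=\frac{4}{3}\sum_{j=0}^3(-1/3)^j[x^n](1-4x)^{j-1/2}+O(4^n n^{-9/2})$, I would factor out $4^{n+1}/(3\sqrt{\pi n})$ and collect by powers of $1/n$.

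The main obstacle is purely this bookkeeping: the term of index $j$ enters at relative order $n^{-j}$, so the $j=0$ Gamma-ratio must be expanded to relative order $n^{-3}$, the $j=1$ ratio to $n^{-2}$, the $j=2$ ratio to $n^{-1}$, and the $j=3$ ratio only to leading order, and the four expansions must be added with no cross-contamination before the coefficients $1,\ 1/24,\ 59/384,\ 2425/9216$ emerge. A convenient way to control this is to express every $[x^n](1-4x)^{j-1/2}$ through $\binom{2n}{n}=[x^n](1-4x)^{-1/2}$ via $\Gamma(n+\tfrac12-j)/\Gamma(n+\tfrac12)=\prod_{i=1}^{j}(n-i+\tfrac12)^{-1}$, reducing everything to the single classical asymptotic expansion of $\binom{2n}{n}$ from Stirling's formula, after which the rational arithmetic can be carried out and checked numerically.
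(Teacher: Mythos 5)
Your proposal is correct and follows essentially the same route as the paper: the generating function $\bigl((1-x)\sqrt{1-4x}\bigr)^{-1}$, a Puiseux expansion $\frac{4}{3}\sum_j(-1/3)^j(1-4x)^{j-1/2}$ at the dominant branch point, Darboux's lemma for the truncation error $O(4^n n^{-9/2})$, and the reduction of each term to $\binom{2n}{n}$ via the product $\prod_{i=1}^{j}(n-i+\tfrac12)^{-1}$ followed by the Stirling expansion of $\binom{2n}{n}$. The only cosmetic difference is that the paper rescales with $z=4x$ to place the branch point on the unit circle before applying Darboux, which changes nothing in substance.
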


\begin{proof}
Because
$\binom{2n}{n}=(-4)^{n}\binom{-1/2}{n}$,
the central binomial coefficients have the generating function
\begin{equation}\label{eq:central_gf}
\sum_{n=0}^\infty\binom{2n}{n}x^n
=
\sum_{n=0}^\infty(-4)^n\binom{-1/2}{n}x^n
=
\frac{1}{\sqrt{1-4x}},
\end{equation}
see~\cite[Equation~(2.5.11)]{Wilf}.
It follows that
\[
\sum_{n=0}^\infty\biggl(\sum_{k=0}^n\binom{2k}{k}\biggr)x^n
=
\biggl(\sum_{k=0}^\infty\binom{2k}{k}x^k\biggr)\biggl(\sum_{h=0}^\infty x^{h}\biggr)
=
\frac{1}{\sqrt{1-4x}}\cdot\frac{1}{1-x},
\]
which we conveniently write in the equivalent form
\begin{equation}\label{eq:genfunc}
\sum_{n=0}^\infty 4^{-n}\biggl(\sum_{k=0}^n\binom{2k}{k}\biggr)z^n
=
\frac{1}{\sqrt{1-z}}\cdot\frac{4}{4-z},
\end{equation}
having set $z=4x$.
This function is analytic on the unit disk and has only one singularity on its boundary, at $z=1$,
which is of algebraic type.
Because
\[
\frac{4}{4-z}
=
\frac{4}{3}\cdot\frac{1}{1+\bigl((1-z)/3\bigr)}
=
\frac{4}{3}\sum_{j=0}^\infty
(-3)^{-j}(1-z)^j,
\]
the Puiseux expansion at $z=1$ of the function in Equation~\eqref{eq:genfunc} reads
\begin{equation*}
\frac{1}{\sqrt{1-z}}\cdot\frac{4}{4-z}
=
\frac{4}{3}\sum_{j=0}^\infty
(-3)^{-j}(1-z)^{j-1/2}.
\end{equation*}
According to Darboux's lemma (see~\cite[Theorem~5.3.1]{Wilf},
\cite[Theorem~8.4]{Szego:orthogonal}
or~\cite[p.~277]{Com}),
the asymptotic expansion for the coefficient of $z^n$ in the generating function
is formally obtained by adding up the coefficients of $z^n$
in each term of its Puiseux expansion at $z=1$.
Truncating and adding the appropriate $O$ term we find that
\begin{equation}\label{eq:Darboux}
4^{-n}\sum_{k=0}^n\binom{2k}{k}
=
\frac{4}{3}\sum_{j=0}^m
(-3)^{-j}\binom{n-j-1/2}{n}
+O(n^{-m-3/2}),
\end{equation}
for any nonnegative integer $m$.
Here we have transformed the binomial coefficients involved using the standard formula
$\binom{-a}{n}=(-1)^n\binom{n+a-1}{n}$.

The former estimate in Equation~\eqref{eq:conj}
follows by setting $m=0$ in Equation~\eqref{eq:Darboux},
using $4^n\binom{n-1/2}{n}=\binom{2n}{n}=(2n)!/(n!)^2$
and Stirling's formula.
Of course, higher values of $m$ provide sharper results, as we now exemplify.
Note that
$4^n\binom{n-1/2}{n}=\binom{2n}{n}$,
whence
\begin{align*}
4^n\binom{n-j-1/2}{n}
&=
\binom{2n}{n}
\cdot
\frac{(-1/2)(-3/2)\cdots\bigl((-2j+1)/2\bigr)}{(n-1/2)(n-3/2)\cdots(n-j+1/2)}.
\\&=
\binom{2n}{n}
\frac{(-1)^j}{(2n-1)(2n/3-1)\cdots\bigl(2n/(2j-1)-1\bigr)}
\end{align*}
Plugging this into Equation~\eqref{eq:Darboux} multiplied by $4^n$ we obtain
\begin{equation}\label{eq:central}
\begin{aligned}
\sum_{k=0}^n\binom{2k}{k}
&=
\frac{4}{3}\binom{2n}{n}
\sum_{j=0}^m
\frac{3^{-j}}{(2n-1)(2n/3-1)\cdots\bigl(2n/(2j-1)-1\bigr)}
\\&\quad+O(4^nn^{-m-3/2}),
\end{aligned}
\end{equation}
where the summand is interpreted to take the value $1$ when $j=0$.
For example, when $m=3$ one may use
\[
\binom{2n}{n}=\frac{4^n}{\sqrt{\pi n}}
\left(1-\frac{1}{8n}+\frac{1}{128\,n^2}+\frac{5}{1024\,n^3}+O(n^{-4})\right)
\]
(see~\cite[Exercise~9.60]{GKP}, for example)
and obtain the conclusion as given in the statement of the theorem after lengthy calculations.
It is certainly best to let a symbolic manipulation system such as {\sc Maple} do the calculations, starting directly from the right-hand side of Equation~\eqref{eq:central}.
\end{proof}

\begin{theorem}\label{thm:Catalan}
We have
\[
\sum_{k=0}^nC_n
=
\frac{4^{n+1}}{3n\sqrt{\pi n}}
\left(1-\frac{5}{8\,n}+\frac{475}{384\,n^2}+\frac{1225}{9216\,n^3}+O(n^{-4})\right).
\]
\end{theorem}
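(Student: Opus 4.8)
The plan is to follow the proof of Theorem~\ref{thm:central} verbatim in structure, merely swapping the central binomial generating function for the Catalan one. First I would recall $\sum_{n=0}^\infty C_n x^n=\bigl(1-\sqrt{1-4x}\bigr)/(2x)$, multiply by $1/(1-x)$ to pass to the partial sums, and substitute $z=4x$ to arrive at the convenient form
\[
\sum_{n=0}^\infty 4^{-n}\Bigl(\sum_{k=0}^n C_k\Bigr)z^n
=
\frac{8\bigl(1-\sqrt{1-z}\bigr)}{z(4-z)}.
\]
I would then verify that this is analytic on the closed unit disk apart from a single algebraic singularity at $z=1$: the apparent singularity at $z=0$ is removable since $1-\sqrt{1-z}=z/2+O(z^2)$, and the pole at $z=4$ lies outside the disk.

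Next I would read off the Puiseux expansion at $z=1$. The constant $1$ in the numerator contributes only an analytic (integer-power) part, which Darboux's method discards; the entire polynomial-order asymptotic expansion is carried by the $-\sqrt{1-z}$ term multiplied by the factor $\phi(z)=-8/\bigl(z(4-z)\bigr)$, which is analytic in a neighbourhood of $z=1$. Expanding $\phi$ in powers of $(1-z)$ by partial fractions gives $\phi(z)=-2\sum_{j\ge0}\bigl(1+(-1)^j3^{-j-1}\bigr)(1-z)^j$, so the singular part of the generating function is $\sum_{j\ge0}\gamma_j(1-z)^{j+1/2}$ with $\gamma_j=-2\bigl(1+(-1)^j3^{-j-1}\bigr)$. (As a reassuring check, the discarded analytic part has only exponentially small coefficients, of order $4^{-n}$, negligible against the Catalan growth.)

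Then I would apply Darboux's lemma exactly as before, using $[z^n](1-z)^{j+1/2}=\binom{n-j-3/2}{n}$, to obtain
\[
4^{-n}\sum_{k=0}^n C_k
=
\sum_{j=0}^m\gamma_j\binom{n-j-3/2}{n}+O(n^{-m-5/2}).
\]
The half-integer shift is now $j+1/2$ rather than the $j-1/2$ of Equation~\eqref{eq:Darboux}, which simply shifts by one the telescoping identity used there: $4^n\binom{n-j-3/2}{n}=\binom{2n}{n}\cdot\frac{(-1/2)(-3/2)\cdots(-(2j+1)/2)}{(n-1/2)(n-3/2)\cdots(n-j-1/2)}$, with $j+1$ factors in each of numerator and denominator. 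Substituting and pulling $\binom{2n}{n}$ out front turns the estimate into a finite sum of the same shape as Equation~\eqref{eq:central}.

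Finally I would insert the known expansion of $\binom{2n}{n}$ and simplify. A term-order count shows $\gamma_j\binom{n-j-3/2}{n}\cdot4^n\sim 4^n n^{-j-3/2}$, so taking $m=3$ controls the error at $O(4^n n^{-11/2})$, that is, relative order $n^{-4}$ against the leading $4^{n+1}/(3n\sqrt{\pi n})$ — precisely what the statement demands; indeed the $j=0$ term alone reproduces this leading factor. The main obstacle is not conceptual but computational: assembling the coefficients $\gamma_j$, the telescoped quotients, and the $\binom{2n}{n}$ series into the stated numbers $-5/8$, $475/384$, $1225/9216$ is the same lengthy bookkeeping flagged after Equation~\eqref{eq:central}, and is best delegated to a symbolic algebra system starting directly from the Catalan analogue of that equation.
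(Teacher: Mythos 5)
Your proposal is correct and follows essentially the same route as the paper's proof: form the partial-sum generating function, rescale by $z=4x$, expand $8/\bigl(z(4-z)\bigr)$ in powers of $1-z$, apply Darboux's lemma to the half-integer powers (the integer powers contributing nothing), and finish via the expansion of $\binom{2n}{n}$ and a symbolic computation. Your coefficient $\gamma_j=-2\bigl(1+(-1)^j3^{-j-1}\bigr)=-\tfrac{2}{3}\bigl(3+(-3)^{-j}\bigr)$ is the correct one and matches the paper's final sum (whose numerator is $3(-1)^j+3^{-j}$); the only differences are cosmetic, such as your minimal choice $m=3$ versus the paper's $m=4$, both of which suffice for the stated $O(n^{-4})$.
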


\begin{proof}
The generating function for the Catalan numbers,
\begin{equation}\label{eq:Catalan_gf}
\sum_{n=0}^\infty C_nx^n
=
\frac{1-\sqrt{1-4x}}{2x},
\end{equation}
can be obtained by integrating Equation~\eqref{eq:central_gf} and adjusting the constant term, see~\cite[Equation~(2.5.10)]{Wilf}.
Proceeding in a similar fashion as in the proof of Theorem~\ref{thm:central}, we have
\[
\sum_{n=0}^\infty\biggl(\sum_{k=0}^n C_k\biggr)x^n
=
\frac{1-\sqrt{1-4x}}{2x(1-x)},
\]
which we rewrite in the equivalent form
\[
\sum_{n=0}^\infty 4^{-n}\biggl(\sum_{k=0}^n C_k\biggr)z^n
=
8\frac{1-\sqrt{1-z}}{z(4-z)},
\]
where $z=4x$.
Now
\[
\frac{8}{z(4-z)}=\frac{2}{z}+\frac{2}{4-z}
=
2\sum_{j=0}^{\infty}(1-z)^j
+\frac{2}{3}
\sum_{j=0}^{\infty}(-3)^{-j}(1-z)^j,
\]
and hence the Puiseux expansion at $z=1$ of our generating function is
\[
8\frac{1-\sqrt{1-z}}{z(4-z)}
=
\frac{2}{3}
\sum_{j=0}^{\infty}\bigl(3-(-3)^{-j}\bigr)\bigl((1-z)^j-(1-z)^{j+1/2}\bigr).
\]
Darboux's lemma tells us that
\begin{equation*}
\begin{aligned}
4^{-n}\sum_{k=0}^n C_k
&=
-\frac{2}{3}\sum_{j=0}^m
\bigl(3-(-3)^{-j}\bigr)\binom{n-j-3/2}{n}
+O(n^{-m-5/2}),
\end{aligned}
\end{equation*}
for any nonnegative integer $m$.
Note that the coefficient $\binom{j}{n}$ of $z^n$ in $(1-z)^j$ in the Puiseux expansion
gives no contribution to this estimate, because it vanishes as soon as $n>m$;
put differently, those terms of the Puiseux expansion add up to a part of the generating function which is analytic at $1$.
Similar calculations as in the previous case lead to
\begin{equation*}
\begin{aligned}
\sum_{k=0}^n C_k
&=
\frac{2}{3}\binom{2n}{n}
\sum_{j=0}^m
\frac{3\cdot(-1)^{j}+3^{-j}}{(2n-1)(2n/3-1)\cdots\bigl(2n/(2j+1)-1\bigr)}
\\&\quad+O(4^nn^{-m-5/2}).
\end{aligned}
\end{equation*}
A {\sc Maple} calculation with $m=4$ returns the desired estimate.
\end{proof}

As concluding remarks, note that this classical method applies similarly to produce asymptotics for the modified sums
$\sum_{k=0}^n \alpha^k\binom{2k}{k}$
and
$\sum_{k=0}^n \alpha^kC_k$,
where $\alpha$ is a complex constant.
We refrain from computing higher order asymptotics for these more general sums
and limit ourselves to finding the first order asymptotic expansion for the former sum (the other being similar).
A case distinction is necessary according to the value of $\alpha$.

When $|\alpha|>1/4$, the generating function in
\begin{equation}\label{eq:genfunc_modif}
\sum_{n=0}^\infty (4\alpha)^{-n}\biggl(\sum_{k=0}^n\alpha^k\binom{2k}{k}\biggr)z^n
=
\frac{1}{\sqrt{1-z}}\cdot\frac{4\alpha}{4\alpha-z},
\end{equation}
which generalizes Equation~\eqref{eq:genfunc},
still has $z=1$ as its dominant singularity (that is, the one of smallest modulus).
The case $m=0$ of the corresponding modification of Equation~\eqref{eq:Darboux} then yields
\[
\sum_{k=0}^n\alpha^k\binom{2k}{k}
=
\frac{(4\alpha)^{n+1}}{4\alpha-1}
\binom{n-1/2}{n}
\bigl(1+O(n^{-3/2})\bigr)
=
\frac{(4\alpha)^{n+1}}{(4\alpha-1)\sqrt{\pi n}}
\bigl(1+O(n^{-3/2})\bigr).
\]

When $|\alpha|<1/4$,
the pole at $z=4\alpha$ becomes the dominant singularity of the generating function in Equation~\eqref{eq:genfunc_modif}.
After bringing this singularity to the unit circle by the
substitution $z=4\alpha x$, an application of Darboux's method with $m=0$ yields
\[
\sum_{k=0}^n\alpha^k\binom{2k}{k}
=
\frac{1}{\sqrt{1-4\alpha}}
\bigl(1+O(n^{-1})\bigr),
\]
where the square root in the formula refers to the branch of $\sqrt{1-z}$ (on the domain $|z|<1$) which evaluates to $1$ when $z=0$.

When $|\alpha|=1/4$ and $\alpha\neq 1/4$, the generating function in Equation~\eqref{eq:genfunc_modif}
has two singularities of modulus one,
and the method of Darboux requires adding up the contributions from both singularities.
If we are content with a first order asymptotic expansion, we have
\[
\sum_{k=0}^n\alpha^k\binom{2k}{k}
=
\frac{1}{\sqrt{1-4\alpha}}
\bigl(1+O(n^{-1/2})\bigr),
\]
where the leading term comes from the pole at $z=1$, and the other singularity contributes to the error term.

Finally, when $\alpha=1/4$ the function in Equation~\eqref{eq:genfunc_modif} is simply $(1-z)^{-3/2}$.
In this case our sum admits a closed form
\[
\sum_{k=0}^{n}4^{-k}\binom{2k}{k}
=
\sum_{k=0}^{n}(-1)^{k}\binom{-1/2}{k}
=
(-1)^n\binom{-3/2}{n}
=\frac{2n+1}{4^n}\binom{2n}{n}
\]
(see sequence {\sf A002457} in~\cite{Sloane:OEIS}),
and hence
\[
\sum_{k=0}^{n-1}4^{-k}\binom{2k}{k}
=2\sqrt{n/\pi}
\left(1+\frac{3}{8n}-\frac{7}{128\,n^2}+\frac{9}{1024\,n^3}+O(n^{-4})\right).
\]

\section{Partial sums modulo a prime}

Let $q$ be a power of a prime $p$.
We will show that
\begin{equation}\label{eq:sums_mod_p}
\sum_{k=0}^{q-1}\binom{2k}{k}\equiv
\Legendre{q}{3}\pmod{p},
\quad\text{and}\quad
\sum_{k=0}^{q-1}C_k\equiv
\frac{3\Legendre{q}{3}-1}{2}\pmod{p},
\end{equation}
where $\Legendre{a}{3}$ is a Legendre symbol, and hence is uniquely determined in this case by
$\Legendre{a}{3}\in\{0,\pm 1\}$ and
$\Legendre{a}{3}\equiv a\pmod{p}$.
We will do that by finding closed expressions for the polynomials
$\sum_{k=0}^{q-1}\binom{2k}{k}x^k$ and $\sum_{k=0}^{q-1}C_kx^k$ over the field of $p$ elements,
after which the substitution $x=1$ will give the desired conclusion.
It will be convenient to assume that $p$ is odd.
However, when $p=2$ congruences~\eqref{eq:sums_mod_p} follow from the power series congruences
$\sum_{k=0}^{\infty}\binom{2k}{k}x^k\equiv 1\pmod{2}$ and
$\sum_{k=0}^{\infty}C_kx^k\equiv \sum_{i=0}^\infty x^{2^i-1}\pmod{2}$,
which are easy to prove directly, starting from the definitions of binomial coefficients
and Catalan numbers.

\begin{theorem}\label{thm:pol_central}
If $q$ is a power of an odd prime $p$ we have
\begin{equation*}
\sum_{k=0}^{q-1}\binom{2k}{k}x^k\equiv(1-4x)^{(q-1)/2}\pmod{p}.
\end{equation*}
\end{theorem}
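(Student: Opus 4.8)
The plan is to prove this identity entirely inside the formal power series ring $\F_p[[x]]$, using only the generating function $\sum_{k\ge 0}\binom{2k}{k}x^k=(1-4x)^{-1/2}$ from Equation~\eqref{eq:central_gf} together with the Frobenius congruence $(1+y)^q\equiv 1+y^q\pmod p$. The guiding idea is that $(1-4x)^{(q-1)/2}=(1-4x)^{q/2}\cdot(1-4x)^{-1/2}$, and that the first factor, after Frobenius, becomes a power series in $x^q$ that is invisible below degree $q$.

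First I would set $F(x)=\sum_{k\ge 0}\binom{2k}{k}x^k$, viewed in $\F_p[[x]]$: this is legitimate because the coefficients $\binom{2k}{k}=(-4)^k\binom{-1/2}{k}$ are integers, so Equation~\eqref{eq:central_gf} reduces to an identity over $\F_p$. Since $F(0)=1$, it has an inverse $s(x):=F(x)^{-1}\in\F_p[[x]]$, and from the integer-coefficient identity $F(x)^2=(1-4x)^{-1}$ one obtains $s(x)^2=1-4x$; thus $s$ plays the role of ``$(1-4x)^{1/2}$'' over $\F_p$. The target polynomial then factors as $(1-4x)^{(q-1)/2}=s(x)^{q-1}=s(x)^q\,F(x)$, the exponent $(q-1)/2$ being a genuine integer precisely because $p$ is odd.

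The crux of the argument is the Frobenius identity: for any $h(x)=\sum_i a_i x^i\in\F_p[[x]]$ one has $h(x)^p=\sum_i a_i^p x^{ip}=h(x^p)$, and iterating $a$ times (with $q=p^a$) gives $h(x)^q=h(x^q)$. Applying this to $s$ yields $s(x)^q=s(x^q)$, a power series \emph{in $x^q$} with constant term $1$, so that $s(x)^q\equiv 1\pmod{x^q}$. Hence
\[
(1-4x)^{(q-1)/2}=s(x^q)\,F(x)\equiv F(x)\pmod{x^q}.
\]
Finally I would compare the two sides modulo $x^q$: the left-hand side is a polynomial of degree $(q-1)/2<q$, hence unchanged by truncation, while the reduction of $F(x)$ modulo $x^q$ is exactly $\sum_{k=0}^{q-1}\binom{2k}{k}x^k$. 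This yields the claimed congruence.

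The individual steps are routine; the only point demanding care is the legitimacy of the half-integer power, that is, ensuring that $s(x)=(1-4x)^{1/2}$ is a well-defined element of $\F_p[[x]]$ rather than the term-by-term reduction of $\sum_k\binom{1/2}{k}(-4)^kx^k$, whose coefficients carry $k!$ in the denominator and need not reduce modulo $p$. Defining $s$ as $F^{-1}$ and checking $s^2=1-4x$ sidesteps this cleanly, after which the identity $s(x)^q=s(x^q)$ does all the work.
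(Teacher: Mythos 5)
Your proof is correct and follows essentially the same route as the paper: both factor $(1-4x)^{(q-1)/2}$ as the generating function $\sum_k\binom{2k}{k}x^k$ times a $q$-th power which, by the Frobenius congruence, becomes a series in $x^q$ and hence is $\equiv 1\pmod{x^q}$. The only difference is bookkeeping: you work in $\F_p[[x]]$ with the square root defined abstractly as $F^{-1}$ and use $s(x)^q=s(x^q)$, whereas the paper works in $\Z[[x]]$ with binomial series and the congruence $(1-4x)^q\equiv 1-4x^q\pmod p$; both handle the half-integer exponent soundly.
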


\begin{proof}
Recall from Equation~\eqref{eq:central_gf} that
$
\sum_{k=0}^\infty\binom{2k}{k}x^k
=
(1-4x)^{-1/2},
$
and work in the formal power series ring $\Z[[x]]$.
Basic facts about binomial coefficients and Fermat's Little Theorem imply that $(1-4x)^q\equiv 1-(4x)^q\equiv 1-4x^q\pmod{p}$.
Therefore, noting that all binomial power series involved have integral coefficients, we have
\begin{equation*}
\begin{aligned}
(1-4x)^{-1/2}
&=
(1-4x)^{(q-1)/2}\bigl((1-4x)^q\bigr)^{-1/2}
\\&\equiv
(1-4x)^{(q-1)/2}(1-4x^q)^{-1/2}
\pmod{p}
\\&\equiv
(1-4x)^{(q-1)/2}
\pmod{(x^q,p)}.
\end{aligned}
\end{equation*}
This last congruence, with respect to the modulus $(x^q,p)$, means that the polynomial, obtained from the power series at the left-hand side
by discarding all terms of degree $q$ or higher, is congruent modulo $p$ to the polynomial at the right-hand side.
The desired conclusion now follows.
\end{proof}

\begin{theorem}\label{thm:pol_Catalan}
If $q$ is a power of an odd prime $p$ we have
\begin{equation*}
\sum_{k=0}^{q-1}C_kx^k\equiv\frac{1-(1-4x)^{(q+1)/2}}{2x}-x^{q-1}\pmod{p}.
\end{equation*}
\end{theorem}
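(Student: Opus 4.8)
The plan is to mimic the generating-function approach used in Theorem~\ref{thm:pol_central}, working in $\Z[[x]]$ and reducing modulo $(x^q,p)$. Starting from the Catalan generating function in Equation~\eqref{eq:Catalan_gf}, namely $\sum_{k=0}^\infty C_kx^k=\bigl(1-\sqrt{1-4x}\bigr)/(2x)$, I would write $\sqrt{1-4x}=(1-4x)^{1/2}=(1-4x)^{(q+1)/2}\bigl((1-4x)^q\bigr)^{-1/2}$ and then invoke the congruence $(1-4x)^q\equiv 1-4x^q\pmod p$ established earlier, together with the integrality of all the binomial power series involved. This should yield $(1-4x)^{1/2}\equiv(1-4x)^{(q+1)/2}(1-4x^q)^{-1/2}\pmod p$, and hence $\bigl(1-(1-4x)^{1/2}\bigr)\big/(2x)\equiv\bigl(1-(1-4x)^{(q+1)/2}(1-4x^q)^{-1/2}\bigr)\big/(2x)\pmod p$.

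The main new subtlety, compared to the central-binomial case, is the division by $2x$: reducing modulo $(x^q,p)$ must be done carefully because discarding terms of degree $\ge q$ does not commute naively with dividing by $x$. So the key step will be to track the coefficients across the factor $1/(2x)$, equivalently to reduce modulo $(x^{q+1},p)$ upstairs before dividing. Concretely, I would argue that the factor $(1-4x^q)^{-1/2}=\sum_{i\ge 0}\binom{-1/2}{i}(-4)^ix^{qi}$ contributes, modulo $x^{q+1}$, only its first two terms $1$ and $2x^q$ (since $\binom{-1/2}{1}(-4)=2$), while all higher terms have degree $2q>q$. Thus modulo $(x^{q+1},p)$ we get $(1-4x)^{1/2}\equiv(1-4x)^{(q+1)/2}(1+2x^q)\equiv(1-4x)^{(q+1)/2}+2x^q(1-4x)^{(q+1)/2}\pmod{(x^{q+1},p)}$, and the second summand reduces modulo $x^{q+1}$ to $2x^q$ times the constant term of $(1-4x)^{(q+1)/2}$, which is $2x^q$.

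Putting this together, I would obtain $(1-4x)^{1/2}\equiv(1-4x)^{(q+1)/2}+2x^q\pmod{(x^{q+1},p)}$, so that $1-(1-4x)^{1/2}\equiv 1-(1-4x)^{(q+1)/2}-2x^q\pmod{(x^{q+1},p)}$. Dividing by $2x$ is now legitimate because the right-hand side is divisible by $x$ (the constant terms cancel, as $(1-4x)^{(q+1)/2}$ has constant term $1$), and reducing the resulting polynomial modulo $(x^q,p)$ yields
\[
\sum_{k=0}^{q-1}C_kx^k
\equiv
\frac{1-(1-4x)^{(q+1)/2}}{2x}-x^{q-1}
\pmod{p},
\]
where the extra $-x^{q-1}$ term is precisely the contribution of $-2x^q/(2x)$. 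The one point demanding genuine care — and the main obstacle — is the bookkeeping of which terms survive the division by $2x$ and the reduction modulo $x^q$; the factor $2$ is invertible since $p$ is odd, so no arithmetic difficulty arises there, but one must verify that $\bigl(1-(1-4x)^{(q+1)/2}\bigr)/(2x)$ is already a polynomial of degree $<q$ (its leading behaviour coming from the top term of $(1-4x)^{(q+1)/2}$) so that the stated right-hand side is genuinely the degree-$<q$ reduction.
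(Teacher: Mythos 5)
Your proposal is correct and follows essentially the same route as the paper: both derive the key congruence $(1-4x)^{1/2}\equiv(1-4x)^{(q+1)/2}+2x^q\pmod{(x^{q+1},p)}$ from $(1-4x)^q\equiv1-4x^q\pmod p$ and then divide by $2x$, dropping the modulus to $(x^q,p)$. The only cosmetic difference is that the paper reaches that congruence by first extending the computation of $(1-4x)^{-1/2}$ modulo $(x^{q+1},p)$ and multiplying by $(1-4x)$, whereas you factor $(1-4x)^{1/2}=(1-4x)^{(q+1)/2}\bigl((1-4x)^q\bigr)^{-1/2}$ directly.
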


\begin{proof}
In this case we need to carry the calculation of the previous proof one step further, and obtain
\[
(1-4x)^{-1/2}
\equiv
(1-4x)^{(q-1)/2}
+2x^q
\pmod{(x^{q+1},p)},
\]
whence
\[
(1-4x)^{1/2}
=
(1-4x)^{-1/2}(1-4x)
\equiv
(1-4x)^{(q+1)/2}
+2x^q
\pmod{(x^{q+1},p)}.
\]
Therefore, we have
\[
\sum_{n=0}^\infty C_nx^n
=
\frac{1-\sqrt{1-4x}}{2x}
\equiv
\frac{1-(1-4x)^{(q+1)/2}}{2x}-x^{q-1}\pmod{(x^q,p)},
\]
which implies the desired conclusion.
\end{proof}

Congruences~\eqref{eq:sums_mod_p}, for odd $p$, follow by evaluating on $x=1$ the polynomials of Theorems~\ref{thm:pol_central} and~\ref{thm:pol_Catalan},
using the fact that
$(-3)^{(q-1)/2}
\equiv\bigl(\frac{q}{3}\bigr)
\pmod{p}$.
This is easy to show
either by using Jacobi symbols and Gauss' quadratic reciprocity law,
or by viewing $-3$ as the discriminant of the polynomial
$(x^3-1)/(x-1)=x^2+x+1$, and then evaluating on $-3$ the quadratic character of the finite field $\F_q$.

\begin{rem}
It appears that congruences~\eqref{eq:sums_mod_p} were first established in~\cite[Theorem~1.2 and Corollary~1.3]{PanSun}.
Those results have the restriction $q=p$, but have $\binom{2k}{k+d}$ and $C_{k+d}$ in place of $\binom{2k}{k}$ and $C_k$,
where $d$ is an integer with $0\le d<q$.
It may be possible to extend our method in order to cover this variation.

The results of~\cite{PanSun} include congruences for similar sums where $\binom{2k}{k}$ and $C_k$
are multiplied by fixed powers of $k$, or divided by $k$.
The results of the former type can easily be recovered from Theorems~\ref{thm:pol_central} and~\ref{thm:pol_Catalan}
by repeated application of the operator $xD$, that is, differentiation followed by multiplication by $x$.
As an example, from Theorem~\ref{thm:pol_central} we have
\begin{equation*}
\sum_{k=1}^{q-1}k\binom{2k}{k}x^k
\equiv
xD(1-4x)^{(q-1)/2}
\equiv
2x(1-4x)^{(q-3)/2}
\pmod{p}
\end{equation*}
for $p$ odd. Assuming $p>3$ for simplicity and substituting $x=1$ we obtain
\begin{equation*}
\sum_{k=0}^{q-1}k\binom{2k}{k}
\equiv
2(-3)^{(q-3)/2}
\equiv
\frac{-2}{3}\Legendre{q}{3}
\pmod{p}
\end{equation*}
When $q=p$ this is the case $d=0$ of the second assertion of~\cite[Theorem~1.2]{PanSun}.
\end{rem}

\begin{rem}
According to~\cite[Corollary~1.1]{SunTau:Catalan},
congruences~\eqref{eq:sums_mod_p} actually hold modulo $p^2$, with a change for $p=3$ in case of the former sum.
We do not know analogues modulo $p^2$ of our Theorems~\ref{thm:pol_central} and~\ref{thm:pol_Catalan}.
However, our approach based on those results has the advantage of readily allowing an evaluation of the modified sums
$\sum_{k=0}^{q-1}\binom{2k}{k}\alpha^k$ and $\sum_{k=0}^{q-1}C_k\alpha^k$ modulo $p$,
where $\alpha$ is any algebraic integer.
In the special case where $\alpha$ is an ordinary integer prime to $p$,
the former evaluation can also be obtained by solving a linear recurrence given in~\cite[Theorem~1.1]{SunTau:central_binomial}.
\end{rem}

\bibliography{References}

\end{document}